\tikzset{snake it/.style={decorate, decoration=snake}}
\definecolor{gray}{rgb}{0.25, 0.25, 0.25}
\newtheorem{theorem}{Theorem}[section]
\newtheorem{corollary}[theorem]{Corollary}
\newtheorem{lemma}[theorem]{Lemma}
\newtheorem{claim}{Claim}
\newtheorem{conjecture}[theorem]{Conjecture}
\theoremstyle{definition}
\newtheorem{definition}{Definition}
\title{A short note on spanning even trees}
\author{Jiangdong Ai\thanks{Corresponding author.  School of Mathematical Sciences and LPMC, Nankai University, Tianjin 300071, P.R.
China. Email: jd@nankai.edu.cn. },~ Zhipeng Gao\thanks{School of Mathematics and Statistics, Xidian University, Xi'an 710126, P.R.
China. Email: gaozhipeng@xidian.edu.cn.}, Xiangzhou Liu\thanks{Department of Mathematics, Tiangong University, Tianjin 300071, P.R.
China. Email: i19991210@163.com.}, Jun Yue\thanks{Department of Mathematics, Tiangong University, Tianjin 300071, P.R.
China. Emailyuejun06@126.com}}
\begin{document}

\maketitle
\begin{abstract}
We call a tree $T$ is \emph{even} if every pair of its leaves is joined by a path of even length. Jackson and Yoshimoto~[J. Graph Theory, 2024] conjectured that every $r$-regular nonbipartite connected graph $G$ has a spanning even tree. They verified this conjecture for the case when $G$ has a $2$-factor. In this paper, we prove that the conjecture holds when $r$ is odd, thereby resolving the only remaining unsolved case for this conjecture. 
\end{abstract}

\smallskip
\noindent{\bf AMS classification: 05C70, 05C20}

\section{Introduction}
We refer to \cite{bondy2008graph} for terminology and notation not introduced here. All graphs considered throughout this paper contain no loops. A tree is called \emph{even} if all pairs of vertices of degree one are joined by a path of even length. For a graph $G$, we call a subgraph $H$, a \emph{spanning} subgraph, of $G$ if $V(H)=V(G)$. In particular,  we say $H$ is a $k$-factor of $G$ if it is a $k$-regular spanning subgraph of $G$. 

Motivated by Saito's question regarding the existence of spanning even trees in regular connected graphs, Jackson and Yoshimoto proposed the following conjecture:
\begin{conjecture}\cite{jacksonspanning}\label{JYC}
    If $G$ is an $r$-regular nonbipartite connected graph, then $G$ contains a spanning even tree.
\end{conjecture}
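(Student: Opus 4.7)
The plan is to split by the parity of $r$. If $r$ is even, Petersen's classical $2$-factor theorem gives that $G$ contains a $2$-factor, and the prior result of Jackson and Yoshimoto (which resolves the conjecture for graphs containing a $2$-factor) immediately produces the spanning even tree. So the new content lies in the case $r$ odd, on which I focus below.

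For $r$ odd, I would first dispose of the subcase in which $G$ has a perfect matching $M$: then $G - M$ is $(r-1)$-regular with $r-1$ even, Petersen's theorem yields a $2$-factor of $G - M$ which is simultaneously a $2$-factor of $G$, and Jackson--Yoshimoto again concludes.

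The main remaining subcase is $r$ odd with $G$ having no perfect matching. Here, Tutte's $1$-factor theorem supplies a barrier $S \subseteq V(G)$ with $o(G-S) > |S|$, and the $r$-regularity (with $r$ odd) forces each odd component $C$ of $G-S$ to be joined to $S$ by an odd, positive number of edges. I would construct the spanning even tree directly from this structure: first build a ``backbone'' spanning tree on $S$, the even components of $G-S$, and one chosen boundary vertex of each odd component; then attach each odd component $C_i$ via a spanning subtree of $C_i$ rooted at its chosen boundary vertex $r_i$, arranged so that all non-$r_i$ leaves of the subtree sit at a prescribed distance parity from $r_i$. If the prescribed parities are coordinated with the depths of the $r_i$'s inside the backbone, every leaf of the final tree will lie in a single bipartition class, making $T$ even.

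The main obstacle is the supporting parity lemma: for each odd component $C$ with distinguished vertex $r \in V(C)$, produce a spanning subtree of $C$ whose non-$r$ leaves all lie at a prescribed parity of distance from $r$. When $C$ is nonbipartite, an odd cycle inside $C$ lets one flip individual leaf parities via fundamental-cycle edge swaps, so both parities are attainable; when $C$ is bipartite the leaf parity is constrained by the bipartition of $C$ and one must instead use the freedom in the choice of boundary vertex together with the odd-sized cut to $S$ and the global nonbipartiteness of $G$ (which injects an odd cycle somewhere) to adjust the parity. Orchestrating these choices coherently across all odd components so that every leaf of the final tree lands in one bipartition class is the delicate step.
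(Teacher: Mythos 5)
Your first two reductions are sound and coincide with what the paper does: for even $r$ Petersen plus the Jackson--Yoshimoto $2$-factor theorem finishes, and for odd $r$ with a perfect matching $M$ the graph $G-M$ is $(r-1)$-regular, so Petersen again yields a $2$-factor of $G$ (this is exactly the paper's Corollary~\ref{coro1} in the perfect-matching case). But the entire difficulty of the conjecture lives in the remaining case --- odd $r$ with no $2$-factor --- and there your argument has a genuine gap. The ``supporting parity lemma'' you need (every odd component $C$ with a distinguished root $r$ admits a spanning subtree all of whose non-root leaves lie at a \emph{prescribed} parity of distance from $r$) is not merely delicate for bipartite $C$; it is false. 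Take $C=C_4$ rooted at a vertex $a$: every spanning tree is a Hamiltonian path, and the only one in which $a$ is not itself a leaf has its single non-root leaf at odd distance, so even-distance leaves are unattainable no matter which root in which colour class you pick. The remedies you gesture at do not supply a mechanism: the odd-sized cut to $S$ constrains only the number of edges leaving $C$, not the structure of spanning trees inside $C$, and the global nonbipartiteness of $G$ provides one odd cycle somewhere, which cannot repair the parity of every bipartite odd component separately. You also leave uncontrolled the leaves of the backbone tree itself (vertices of $S$ or of even components of $G-S$ may end up as leaves of the final tree with the wrong parity), and the even components need spanning subtrees with the same unaddressed leaf-parity problem.

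The paper takes a different route that sidesteps all of this. Instead of a Tutte barrier, it invokes Kano's theorem (Theorem~\ref{1,2factor}) to obtain a \emph{good} $\{1,2\}$-factor of $G$, i.e.\ a spanning subgraph every component of which is a single edge or a cycle. Such components are exactly the ones for which your parity lemma is trivially true: deleting one edge of a cycle at an attachment vertex, or both edges at a chosen vertex, or taking a single edge whole, produces a path contributing only one or two new leaves whose distance parities are forced and computable. The tree is then grown one component at a time via ``good/additive sequences'' of components, with the counting Lemma~\ref{fk} guaranteeing that an independent set $Y$ of tree-side endpoints always sends an edge outward, and a maximality argument over bipartitions of the factor resolving the case analysis. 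If you want to salvage your outline, the essential missing idea is precisely this: replace the arbitrary odd components of a Tutte barrier by the edges and cycles of a $\{1,2\}$-factor, for which leaf parities can actually be controlled.
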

This conjecture was partially confirmed ([Theorem 1] in \cite{jacksonspanning}) under the condition that $G$ possesses a 2-factor. So sufficient conditions for an regular graph to have a 2-factor will be helpful. Especially, Petersen \cite{petersen1891theorie} showed that every $r$-regular graph where $r$ is even admits a 2-factor, narrowing our focus to verifying the conjecture for odd integer $r$.
\begin{theorem}\cite{jacksonspanning}
    Every regular nonbipartite connected graph which has a 2-factor has a spanning even tree.
\end{theorem}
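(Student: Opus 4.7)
The strategy is to exploit the given 2-factor $F$ directly. Write $F=C_1\cup\cdots\cup C_k$ as a disjoint union of vertex-disjoint cycles, and form the quotient multigraph $G/F$ by contracting each cycle $C_i$ to a single vertex $w_i$. Since $G$ is connected, so is $G/F$, so $G/F$ has a spanning tree $T'$ with $k-1$ edges. Each edge of $T'$ lifts to an edge of $G$ with endpoints in two distinct cycles of $F$. These $k-1$ lifted edges together with $F$ form a spanning connected subgraph $H$ of $G$ with $|V(G)|+k-1$ edges whose cycle space is generated exactly by the $C_i$'s.

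To turn $H$ into a spanning tree $T$ I remove one edge from each $C_i$, leaving $|V(G)|-1$ edges, which is forced to be a spanning tree. The plan is to choose these removals so that $T$ is even. For each $C_i$, let $A_i\subseteq V(C_i)$ be the set of vertices incident to a lifted edge; these are the attachment points of $C_i$ to the rest of $H$. Removing an edge of $C_i$ whose endpoints avoid $A_i$ guarantees that every attachment point has degree at least $2$ in $T$ and is therefore not a leaf; the only potential leaves of $T$ are the endpoints of the removed edges themselves.

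Given these choices, the remaining task is to verify that $T$ is even, i.e.\ that all its leaves lie on the same side of $T$'s bipartition. When $F$ contains an odd cycle, the odd length provides the extra parity flexibility needed to align the leaves on a single side. In the other case, where every cycle of $F$ is even and $F$ is itself bipartite with a canonical bipartition of $V(G)$, the nonbipartiteness of $G$ supplies an edge $e\in E(G)\setminus E(F)$ whose endpoints lie on the same side of this bipartition; substituting $e$ for a suitable lifted edge in $T'$ flips the parity in one of the cycles $C_i$, producing the desired alignment.

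The step I expect to be the main obstacle is the evenness verification in the previous paragraph: tracking how leaf parities propagate across cycles through the attachment points and, especially, executing the parity-flipping swap when every cycle of $F$ is even. The analysis must handle several configurations of cycle lengths and attachment patterns, and it is exactly here that the nonbipartiteness of $G$ plays its essential role.
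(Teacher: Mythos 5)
Your construction of $H$ (the $2$-factor $F=C_1\cup\cdots\cup C_k$ plus $k-1$ lifted edges coming from a spanning tree of $G/F$) and the observation that deleting one edge per cycle yields a spanning tree whose leaves are confined to the endpoints of the deleted edges are both sound, but the proof has a genuine gap exactly where you predict: the evenness verification is not a detail to be checked at the end, it is the entire content of the theorem, and the tools you propose for it do not suffice. A first, local problem: an edge of $C_i$ both of whose endpoints avoid the attachment set $A_i$ need not exist (take a triangle with two attachment points), so even the degree bookkeeping is not secured. The serious problem is global. A cycle $C_i$ with a single attachment point $a_i$ forces you to delete an edge incident to $a_i$ when $|C_i|$ is even, since otherwise the two new leaves are adjacent along $C_i$ and hence at distance $|C_i|-1$ in $T$, which is odd; the surviving leaf then lands at a parity dictated by which vertex of $C_i$ happens to be $a_i$, and your arbitrary choice of the spanning tree $T'$ of $G/F$ gives you no control over this. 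When every cycle of $F$ is even you may need to correct the parities of attachment points in many cycles simultaneously, while a single nonbipartite edge $e$ supplies at most one correction; ``substituting $e$ for a suitable lifted edge'' has no mechanism for propagating the right parity to all the other cycles. It is telling that your argument never uses the regularity of $G$, which is a hypothesis of the statement.

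For comparison: the cited proof of Jackson and Yoshimoto, whose scheme is reproduced for $\{1,2\}$-factors in Theorems~\ref{good} and~\ref{spanning} of this paper, is organized precisely around that propagation. One fixes a bipartition $(X_i,Y_i)$ of each even cycle, grows the tree along ``good sequences'' in which consecutive cycles are joined by an edge from $Y_{i_h}$ to $X_{i_{h+1}}$, so that the parity of each new attachment point is controlled by construction, and flips the bipartition of a newly reached cycle whenever needed. When the process stalls, i.e.\ when $\bigcup_i Y_i$ is independent, Lemma~\ref{fk} --- a degree count that uses both $r$-regularity and nonbipartiteness --- produces an escape edge out of $\bigcup_i Y_i$, which either recruits a new cycle with the correct orientation or creates an edge inside the $Y$-sides that closes off an even tree. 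That counting lemma is the engine your proposal is missing, and without it (or a substitute that uses regularity) the parity-alignment step cannot be completed as sketched.
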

In fact, following a conclusion established by Hanson et al. \cite{hanson1998interval}, the authors \cite{jacksonspanning} posed that the only remaining unsolved case is when $r$ is odd and $G$ has at least $r$ bridges.  

Following the same idea, we hope to find a 2-factor in an $r$-regular graph where $r$ is odd. However, this does not always come true since we can easily obtain a counter-example as depicted in Figure \ref{pp-1}. It's reasonable to assume that there exists a spanning subgraph similar to `2-factor' in a regular graph, that is $\{1,2\}$-factor, which is defined in Definition \ref{def}. There is a huge literature investigating graph factors and factorization, see more information on \cite{plummer2007graph}.

In this short note, we address the final unsolved case for Conjecture~\ref{JYC}.

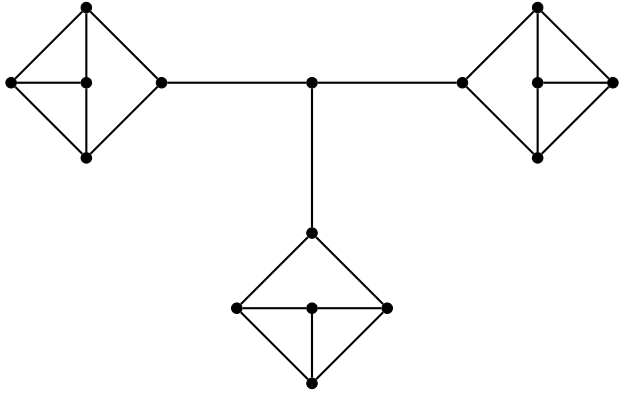
\begin{figure}[h]
		\centering
		\tikzset{  
			mynode/.style={  
				draw,
				circle,
				inner sep=1.4pt,
				fill
			}  
		}  
		\begin{tikzpicture}		
    \node [mynode](a) at (0,0){};
\node [mynode](b1) at (2,0){} ;
\node [mynode](b2) at (3,1){} ;
\node [mynode] (b3) at (4,0){};
\node [mynode] (b4) at (3,-1){};
\node [mynode] (b5) at (3,0) {};
\draw[thick] (a)--(b1)--(b2)--(b3)--(b4)--(b1);
\draw [thick] (b3)--(b5)--(b4);
\draw[thick] (b5)--(b2);
\node [mynode] (c1) at (0,-2){};
\node [mynode] (c2) at (-1,-3){};
\node [mynode] (c3) at (1,-3){};
\node [mynode] (c5) at (0,-4){};
\node [mynode] (c4) at (0,-3){};
\draw[thick] (a)--(c1)--(c2)--(c5)--(c3)--(c1);
 \draw[thick] (c4)--(c5);
 \draw[thick] (c2)--(c4)--(c3);
\node [mynode] (d1) at (-3,0){} ;
\node [mynode] (d2) at (-2,0) {};
\node [mynode] (d3) at (-3,1){} ;
\node [mynode] (d4) at (-4,0){} ;
\node [mynode] (d5) at (-3,-1){};
\draw[thick] (a)--(d2)--(d3)--(d4)--(d5)--(d2);
 \draw[thick] (d1)--(d4);
 \draw[thick] (d3)--(d1)--(d5);

		\end{tikzpicture}
		\caption{A 3-regular graph without 2-factor.}
		\label{pp-1}
	\end{figure}

\noindent {\bf Additional Definitions and Useful Tools.}
\begin{definition}
A spanning subgraph $F$ of $G$ is called an $\{a,b\}$-factor of $G$ if $a\leq d_{F}(x) \leq b$ for all $x\in V(G)$, and we usually call a $\{k,k\}$-factor a $k$-factor. A $\{k-1,k\}$-factor is called a good $\{k-1,k\}$-factor if each of whose components is regular.
\end{definition}

\begin{definition}\label{def}
An even tree $T$ in a graph $G$ is said to be good if $G\setminus V(T)$ has a good $\{1,2\}$-factor.
\end{definition}

\begin{theorem}\label{1,2factor}\cite{kano1986factors}
Let $r$ and $k$ be positive integers. If $k\leq 2(2r+1)/3$, then every $(2r+1)$-regular graph has a good $\{k-1,k\}$-factor.
\end{theorem}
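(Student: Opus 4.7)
The natural tool is Lov\'asz's general $(g,f)$-factor theorem applied with the constant functions $g\equiv k-1$ and $f\equiv k$; this would produce a spanning subgraph $F$ of $G$ with $k-1\le d_F(v)\le k$ for every $v$, provided a Tutte-type inequality holds for all disjoint $S,T\subseteq V(G)$.

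First I would verify that inequality. Plugging in $d_G(v)=2r+1$ reduces the left-hand side to a combination of $k|S|$ and $(2r+2-k)|T|$ (both positive and of comparable size when $k\le 2(2r+1)/3$), minus $e_G(S,T)$ and a component-counting correction on $G-S-T$. The hypothesis $3k\le 2(2r+1)$ provides enough slack to absorb the corrections: the correction grows linearly in the number of bad components of $G-S-T$, and a standard counting using the $(2r+1)$-regularity of $G$ bounds it by a quantity dominated by $k|S|+(2r+2-k)|T|$. The outcome of this step is a $\{k-1,k\}$-factor $F$.

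The more delicate step is upgrading $F$ to a \emph{good} factor, in which each component is individually $(k-1)$- or $k$-regular. I would take $F$ minimizing the lexicographic potential $(m(F),\iota(F))$, where $m(F)$ counts ``mixed'' components (those containing both a $(k-1)$-vertex and a $k$-vertex) and $\iota(F)$ is the total degree imbalance inside such components. In a mixed component $C$, I would seek an $F$-alternating trail from a $k$-vertex $u\in C$ to a $(k-1)$-vertex $v$, possibly passing through other components via non-$F$ edges of $G$. Swapping $F$-status along the trail decreases $d_F(u)$ to $k-1$ and increases $d_F(v)$ to $k$ while leaving all interior degrees unchanged, yielding a new factor $F'$ with strictly smaller potential, contradicting minimality once no mixed components remain.

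The main obstacle is proving that such an alternating trail always exists whenever the potential is positive. The bound $k\le 2(2r+1)/3$ is doing the real work here: it guarantees a substantial number of non-$F$ edges at every vertex (specifically $d_G(v)-d_F(v)\ge 2r+2-k\ge (2r+4)/3$), ensuring that an alternating walk starting at any $k$-vertex has ample room to develop and, via a parity/pigeonhole argument, must eventually reach a $(k-1)$-vertex. Making this reachability argument rigorous, and checking that each augmentation strictly decreases $(m(F),\iota(F))$, is the core technical content; the rest is bookkeeping.
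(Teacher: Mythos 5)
First, a point of comparison: the paper does not prove this statement at all --- it is quoted from Kano's 1986 paper and used as a black box, so there is no in-paper proof to measure your attempt against. Judged on its own terms, your sketch has a genuine gap in its second step. The first step is essentially free: a $\{k-1,k\}$-factor exists by the Lov\'asz $(g,f)$-factor theorem with $g\equiv k-1<k\equiv f$ under hypotheses far weaker than $k\le 2(2r+1)/3$ (and, incidentally, your count of non-$F$ edges is backwards: $d_G(v)-d_F(v)\le 2r+2-k$, while the relevant lower bound is $2r+1-k\ge (2r+1)/3$). Since the hypothesis does no work in step one, the augmentation step carries the entire content of the theorem, and as described it does not go through.

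Concretely: swapping $F$-status along an alternating trail from a $k$-vertex $u$ to a $(k-1)$-vertex $v$ turns $u$ into a $(k-1)$-vertex and $v$ into a $k$-vertex. If $u$ and $v$ lie in the same mixed component, the number of vertices in each degree class of that component is unchanged (one vertex leaves and one enters each class), so $m(F)$ and any symmetric imbalance functional $\iota(F)$ are preserved; the swap merely relabels vertices. If instead the trail exits the component through non-$F$ edges, those edges become $F$-edges after the swap, merging components and possibly converting previously regular components into new mixed ones, so the potential can increase. On top of this, the existence of the required alternating trail is precisely the hard reachability statement, and it is only asserted, not proved; a pigeonhole count of non-$F$ edges does not produce a trail that alternates correctly at every interior vertex. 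You also never address the parity obstructions (for instance, a $k$-regular component with $k$ odd must have an even number of vertices), which any correct argument must respect. The known route to this theorem does not augment at all: it invokes the $(g,f)$-factor theorem with degree prescriptions chosen so that the degree-$(k-1)$ and degree-$k$ vertices are forced into separate components from the outset, and the inequality $3k\le 2(2r+1)$ is exactly what makes the corresponding Tutte--Lov\'asz condition verifiable. If you want a complete proof, that is where the hypothesis must be spent.
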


\begin{lemma}\label{J.Petersen}\cite{petersen1891theorie}
    Every $2k$-regular graph contains a $2$-factor.
\end{lemma}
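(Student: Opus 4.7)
The plan is straightforward since Lemma~\ref{J.Petersen} is Petersen's classical $2$-factor theorem, and the standard route is the Eulerian-circuit reduction to a bipartite matching problem. Working one connected component at a time, I may assume $G$ itself is connected and $2k$-regular. Every vertex then has even degree, so $G$ admits an Eulerian circuit $C$; traversing $C$ in a fixed direction orients every edge of $G$ so that each vertex has exactly $k$ out-edges and $k$ in-edges.

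Next, I would form an auxiliary bipartite graph $H$ with parts $V^{+}=\{v^{+}:v\in V(G)\}$ and $V^{-}=\{v^{-}:v\in V(G)\}$, placing the edge $u^{+}v^{-}$ in $E(H)$ for every oriented edge $u\to v$ of $G$. By construction $H$ is $k$-regular bipartite. Every $k$-regular bipartite graph with $k\geq 1$ satisfies Hall's condition (a trivial edge-count for any $S$ in one side gives $|N(S)|\geq |S|$), so by König's/Hall's theorem $H$ admits a perfect matching $M$. Decoding each matched pair $u^{+}v^{-}\in M$ back to the underlying edge $uv\in E(G)$ yields a spanning subgraph $F\subseteq G$: each vertex $v$ appears exactly once as $v^{+}$ and exactly once as $v^{-}$ in $M$, so $v$ is incident to exactly two edges of $F$.

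The only point that needs a brief verification is that the two edges of $F$ at $v$ are really distinct edges of $G$, so that $F$ is a genuine $2$-factor rather than a pseudo-subgraph with a doubled edge at $v$. Since each undirected edge of $G$ contributes exactly one directed edge to the orientation, $H$ cannot contain both $u^{+}v^{-}$ and $v^{+}u^{-}$; hence the outgoing and incoming matched edges at $v$ lead to two distinct neighbours. (The no-loops hypothesis of the paper guarantees there is no self-orientation to worry about.) I do not anticipate any deeper obstacle: the whole content of the proof is this reduction, and both of its nontrivial inputs (existence of Eulerian circuits in graphs with all even degrees, and the existence of perfect matchings in regular bipartite graphs) are classical.
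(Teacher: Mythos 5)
Your argument is correct: it is the classical Eulerian-orientation proof of Petersen's theorem (orient along an Euler circuit of each component, pass to the $k$-regular bipartite graph on $V^{+}\cup V^{-}$, extract a perfect matching, and pull it back to a $2$-regular spanning subgraph), and the paper itself offers no proof of Lemma~\ref{J.Petersen}, simply citing Petersen. The one step worth stating slightly differently is the final verification: the two matched edges at $v$ are distinct simply because one is an arc leaving $v$ and the other an arc entering $v$ (and there are no loops), which also covers the multigraph case where parallel edges could make both $u^{+}v^{-}$ and $v^{+}u^{-}$ appear in $H$; in the paper's application the graph is simple, so your phrasing suffices.
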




\begin{corollary}\label{coro1}
    Let $G$ be an $r$-regular connected graph where $r$ is odd, then it has a good $\{1,2\}$-factor $F$ with at least one $2$-regular component. 
\end{corollary}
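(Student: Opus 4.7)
The plan is to combine the two ingredients already stated in the preamble: Theorem~\ref{1,2factor} on the existence of good $\{k-1,k\}$-factors in odd-regular graphs, and Petersen's theorem (Lemma~\ref{J.Petersen}). Write $r = 2m+1$ with $m \geq 1$ (the case $r=1$, i.e.\ $G = K_2$, is implicitly excluded since the statement asks for a 2-regular component). Applying Theorem~\ref{1,2factor} with $k=2$ is legal because the hypothesis $k \leq 2(2m+1)/3$ collapses to $3 \leq 2m+1$, which holds whenever $m \geq 1$. This immediately produces a good $\{1,2\}$-factor $F$ of $G$.

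I would then split into two cases according to the structure of the components of $F$. In the first case, some component of $F$ is already 2-regular, and there is nothing left to prove: $F$ itself is the required factor.

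In the remaining case, every component of $F$ is 1-regular, so $F$ is a perfect matching of $G$. The idea is to upgrade this matching to a better factor. Remove its edges: the spanning subgraph $G - E(F)$ is $(2m)$-regular with $2m \geq 2$, so Lemma~\ref{J.Petersen} supplies a 2-factor $H$ of $G - E(F)$. This $H$ is also a 2-factor of $G$, hence a good $\{1,2\}$-factor of $G$ every component of which is 2-regular; in particular it has at least one 2-regular component, and replaces $F$ as the desired factor.

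I do not anticipate a real obstacle here — each of the two halves of the argument is a one-line appeal to a named result. The only subtlety worth flagging is keeping track of why the hypotheses line up: one needs $r \geq 3$ both for the arithmetic condition of Theorem~\ref{1,2factor} with $k=2$ and for $G - E(F)$ to have positive even regularity so that Petersen's theorem applies.
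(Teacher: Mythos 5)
Your proposal is correct and follows essentially the same route as the paper: apply Theorem~\ref{1,2factor} with $k=2$, and if the resulting good $\{1,2\}$-factor is a perfect matching, delete it and invoke Petersen's theorem on the remaining $(r-1)$-regular graph. Your extra remarks on why $r\geq 3$ is needed for both steps are a welcome clarification that the paper leaves implicit.
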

\begin{proof}
     Let $k=2$, then there is a good $\{1,2\}$-factor $F$ by Theorem \ref{1,2factor}. If $F$ is not a perfect matching, the result holds clearly.
     If $F$ is a perfect matching, we can derive a spanning subgraph $G^{'}$ of $G$ by deleting $E(F)$. Since $G^{'}$ is $(r-1)$-regular, Lemma \ref{J.Petersen} ensures the existence of a 2-factor of $G^{'}$, which is also a 2-factor of $G$.

\end{proof}

In~\cite{jacksonspanning}, there is a simple but valuable lemma.
\begin{lemma}\label{fk}\cite{jacksonspanning}
Let $G$ be an $r$-regular nonbipartite connected graph and $Y\subset V(G)$ with $E(Y)=\emptyset$. Then for any $X\subset V(G)\setminus Y$ with $|X|\leq |Y|, E(Y,V(G)\setminus (X\cup Y))\neq \emptyset$. 
\end{lemma}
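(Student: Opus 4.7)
The plan is to prove Lemma \ref{fk} by contradiction, assuming $E(Y, V(G)\setminus(X\cup Y)) = \emptyset$ and showing this forces $G$ to be bipartite, against hypothesis. The whole argument should reduce to a double-counting of edges incident to $Y$, combined with the connectivity of $G$.

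First I would observe that under the contrary assumption, every neighbor of every vertex in $Y$ must lie in $X$: it cannot lie in $Y$ because $E(Y)=\emptyset$, and it cannot lie in $V(G)\setminus(X\cup Y)$ by assumption. Counting the edges between $Y$ and $X$ from the $Y$-side then gives exactly $e(Y,X)=r|Y|$, since each vertex of $Y$ contributes all $r$ of its incident edges. From the $X$-side we have the trivial upper bound $e(Y,X) \leq \sum_{x\in X}d(x) = r|X|$. Hence $r|Y|\leq r|X|$, i.e.\ $|Y|\leq|X|$, and together with the hypothesis $|X|\leq|Y|$ this forces $|X|=|Y|$.

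The next step is to exploit the equality $e(Y,X)=r|X|$: every edge incident to a vertex of $X$ must go to $Y$. In particular $X$ is itself an independent set, and there are no edges between $X$ and $V(G)\setminus(X\cup Y)$ either. Combining this with the original assumption that $Y$ has no edges to $V(G)\setminus(X\cup Y)$, we conclude that the vertex set $X\cup Y$ has no edges to its complement in $G$.

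Because $G$ is connected, this is only possible if $V(G)\setminus(X\cup Y)=\emptyset$, i.e.\ $V(G)=X\cup Y$. But then $X$ and $Y$ are two independent sets partitioning $V(G)$, so $G$ is bipartite, contradicting the hypothesis. I do not anticipate any real obstacle here: the result is essentially a one-line degree count plus an appeal to connectivity, and the tightness of $|X|\leq|Y|$ is exactly what is needed to upgrade a neighborhood condition into a global bipartition.
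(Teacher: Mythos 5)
Your proof is correct, and it is the standard double-counting argument: the paper itself only cites this lemma from Jackson--Yoshimoto without reproducing a proof, and your argument (forcing $N(Y)\subseteq X$, deducing $|X|=|Y|$ and that all edges at $X$ go to $Y$, then using connectivity to conclude $V(G)=X\cup Y$ and hence bipartiteness) is exactly the intended one. The only implicit point worth noting is that $Y\neq\emptyset$ is needed for the connectivity step, but this is forced by the statement since the conclusion is vacuously false for $Y=\emptyset$.
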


\section{Main Results}
We want to point out that the main proof technique employed in this section is significantly similar to that of Theorem $4$ in~\cite{jacksonspanning} of Jackson and Yoshimoto. Compared with their proofs, we work on the $\{1,2\}$-factor rather than the $2$-factor. This adjustment necessitates an additional discussion regarding the component that consists of a single edge. In the following proof, denote by $[a,b]$ the set of $\{a,a+1,a+2,\ldots,b-1,b\}$. In particular, let $[b]=[1,b]$.

\begin{theorem}\label{good}
 If $G$ is an $r$-regular nonbipartite connected graph where $r$ is odd, then $G$ has a good even tree.

\end{theorem}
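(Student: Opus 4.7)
My plan is to mirror the construction used by Jackson and Yoshimoto in their Theorem 4, replacing the $2$-factor of $G$ with the good $\{1,2\}$-factor supplied by Corollary~\ref{coro1}, and inserting an additional case analysis to handle the new single-edge components of that factor (the ``additional discussion'' signalled by the authors at the start of Section 2).

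First, invoke Corollary~\ref{coro1} to fix a good $\{1,2\}$-factor $F$ of $G$ with at least one $2$-regular component $C_0$; pick a vertex $v_0 \in V(C_0)$ and initialise $T := \{v_0\}$, which is vacuously even. I would then grow $T$ by repeatedly attaching length-two paths $u u' u''$ with $u$ a leaf of the current $T$ (or $v_0$ itself on the very first step) and $u', u'' \notin V(T)$. Each such move preserves the even-tree property, since the new leaf $u''$ is at distance $2$ from $u$ and every old leaf was at even distance from $u$ in $T$. Existence of $u'$ is obtained from Lemma~\ref{fk} applied with $Y$ a suitable independent subset of the current leaves and $X$ a bounded auxiliary set, while the regularity $r \geq 3$ of $G$ produces $u''$.

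The crux of the construction is to maintain two invariants which together certify that $G \setminus V(T)$ has a good $\{1,2\}$-factor: (a) on every cycle component $C_i$ of $F$, the residue $V(C_i) \setminus V(T)$ admits a perfect matching (equivalently, either $V(T)$ misses $C_i$ entirely or every maximal subpath of $C_i \setminus V(T)$ has an even number of vertices); and (b) on every single-edge component $e_j = x_j y_j$ of $F$, either $\{x_j, y_j\} \subseteq V(T)$ or $\{x_j, y_j\} \cap V(T) = \emptyset$. Invariant (b) is the genuinely new ingredient beyond the $2$-factor argument: if a candidate extension $u u' u''$ would absorb exactly one of $x_j, y_j$, I would follow it by an additional length-two extension that also absorbs the orphaned partner, again locating the needed cross-edges via Lemma~\ref{fk}.

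Termination is automatic, since $G$ is finite and each step strictly enlarges $T$. Upon termination, $F$ with each broken cycle-piece replaced by a perfect matching on its vertices is a good $\{1,2\}$-factor of $G \setminus V(T)$, so the terminal $T$ is a good even tree. The main obstacle is the case analysis verifying that both invariants can be maintained at every step and that the corrective extensions forced by invariant (b) do not themselves break evenness, violate invariant (a), or orphan a fresh single-edge endpoint; this is where Lemma~\ref{fk} and a careful choice of which leaf to extend at each stage will do the heavy lifting.
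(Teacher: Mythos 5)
Your proposal has genuine gaps, and the most serious one appears at the very first step. By the paper's opening reduction (which you would also need), if any cycle component of $F$ is odd then that cycle minus one edge is already a good even tree, so the only nontrivial case is when every cycle component of $F$ is even. In that case your initialization $T:=\{v_0\}$ with $v_0\in V(C_0)$ already violates your invariant (a): $C_0-v_0$ is a path on an odd number of vertices, which has no perfect matching, and since a good $\{1,2\}$-factor of a path must be a perfect matching (its components must be regular and a path contains no cycles), the residue of $C_0$ admits no good $\{1,2\}$-factor. Growing by two vertices at a time does not repair this in any controlled way, so the invariant you rely on fails from step zero precisely in the case that matters. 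A second gap is your use of Lemma~\ref{fk}: it requires $E(Y)=\emptyset$ and $|X|\leq |Y|$, and with $Y$ a subset of the leaves of the growing tree and $X$ an unspecified ``bounded auxiliary set'' neither hypothesis is verified. (For instance, when $T$ is a long path there are only two leaves but many internal vertices.) In the paper the inequality is obtained for free because $X$ and $Y$ are the two sides of bipartitions of even cycles and single edges, so $|X_i|=|Y_i|$ componentwise; your tree-based sets have no such balance. Moreover, when the chosen leaf set is not independent the lemma does not apply at all, yet leaf adjacency is exactly the favorable configuration that must be exploited separately. Finally, the corrective extension you invoke to restore invariant (b) --- absorbing the orphaned endpoint of a single-edge component by a further length-two path that is simultaneously at even distance, uses a fresh intermediate vertex, and breaks neither (a) nor (b) elsewhere --- is asserted rather than proved, and this is where the real combinatorial work lies.

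It is also worth noting that the paper's proof of this theorem avoids all of these difficulties by never breaking a component of $F$: it fixes bipartitions $(X_i,Y_i)$ of the even components, defines ``good sequences'' of components reachable from $F_1$ via $E(Y_{i_h},X_{i_{h+1}})$ edges, chooses a bipartition maximizing the number of good components, and then either extends the good set (contradiction) or uses an edge inside $\bigcup Y_i$ to assemble, in one shot, an even tree whose vertex set is a union of \emph{whole} components of $F$. The leftover components of $F$ then automatically form the required good $\{1,2\}$-factor, so no ``perfect matching on broken pieces'' is ever needed. If you want to salvage a greedy growth argument, it is the proof of Theorem~\ref{spanning} (where one starts from a good even tree and extends it) that your scheme more closely resembles --- but even there the extensions absorb whole components of the factor rather than two vertices at a time.
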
 

\begin{proof}
    By Corollary \ref{coro1} there is a good $\{1,2\}$-factor $F=F_{1}\cup F_{2}\cup\cdots \cup F_{k}$ of $G$ such that $F_i$ is a cycle for some $i$. Without loss of generality, we assume that $F_{1}$ is a cycle. For any vertex $v$ of $F_i$ ($i\in [k]$), 
    we define two functions as follows: $$e_{1}(v,F_{i}) = 
\begin{cases} 
uv,u\in N_{F_{i}}(v) & \text{if $F_{i}$ is a cycle,} \\
\emptyset & \text{if $F_{i}$ is an edge.} 
\end{cases}$$
$$
e_{2}(v,F_{i}) = 
\begin{cases} 
u_{1}v\cup u_{2}v,u_{1},u_{2}\in N_{F_{i}}(v) & \text{if $F_{i}$ is a cycle,} \\
E(F_{i}) & \text{if $F_{i}$ is an edge.} 
\end{cases}
$$ 
    
    If $F_1$ is an odd cycle, then $T=F_1\setminus e_{1}(x,F_1)$ is a good even tree for any vertex $x$ of $F_1$. Thus, we may assume that there is no odd cycle in $F$. Let $(X_{1},Y_{1})$ be a bipartition of $F_1$. Since $F_1$ is an even cycle, we have $|X_{1}|=|Y_{1}|$. If there is a chord $uv\in E(G)$ such that $u,v\in Y_{1}$, then $T=(F_{1}\setminus e_{2}(v,F_{1}))\cup \{uv\}$ is a good even tree. Therefore, we may assume that $E(Y_{1})=\emptyset$. Since $|X_{1}|=|Y_{1}|$, by Lemma \ref{fk} we have that $E(Y_{1},V(G)\setminus V(F_{1}))\ne \emptyset$.

   We say that a bipartition $(X,Y)$ of $F$ is consistent if $X_{1}\subseteq X$ and $Y_{1}\subseteq Y$. For any consistent bipartition of $F$, let $X_{i}=X\cap V(F_{i})$ and $Y_{i}=Y\cap V(F_{i})$ for all $i\in [2,k]$. Then $(X_{i},Y_{i})$ is a bipartition of $F_{i}$ for all $i\in [k]$. A sequence $F_{i_{1}},F_{i_{2}},\cdots ,F_{i_{t}}$ in $F$ is a good sequence with respect to the bipartition $(X,Y)$ if $E(Y_{i_{h}},X_{i_{h+1}})\ne \emptyset$ for all $1\leq h <t$. We say that $F_{i}$ is good with respect to the bipartition $(X,Y)$ if there is a good sequence from $F_1$ to $F_{i}$.

   Now we choose a consistent bipartition $(X,Y)$ of $F$ such that the number of $F_{i}$ which is good with respect to $(X,Y)$ is as large as possible. 
   By symmetry, we may assume $F^{good}=\{F_{1},F_{2},\cdots ,F_{t}\}$. \\

   {\bf Case 1. $E(\cup ^{t}_{i=1} Y_{i})=\emptyset$.}
\\

   Since $\sum_{i=1}^{t}|X_{i}|=\sum_{i=1}^{t}|Y_{i}|$, Lemma \ref{fk} implies that $E(\cup ^{t}_{i=1} Y_{i},V(G)\setminus\cup_{i=1}^{t}F_{i})\ne \emptyset$. Then there is a vertex $v\in F_{j}$ such that $E(\cup ^{t}_{i=1} Y_{i},v) \ne \emptyset$, where $j>t$. Choose a bipartition $(X_{j}^{'},Y_{j}^{'})$ of $F_{j}$ such that $E(\cup ^{t}_{i=1} Y_{i},X_{j}^{'}) \ne \emptyset$ (By choosing $X_{j}^{'}$ such that $v\in X_{j}^{'}$). Then $F_{j}$ becomes to be good with respect to the bipartition $X^{'}=(X\setminus X_{j})\cup X_{j}^{'}$ and $Y^{'}=(Y\setminus Y_{j})\cup Y_{j}^{'}$ of $F$ and all elements in $F^{good}\cup \{F_{j}\}$ remain good with respect to $(X^{'},Y^{'})$. This contradicts the choice of $(X,Y)$.\\

    {\bf Case 2. $E(\cup ^{t}_{i=1} Y_{i})\neq \emptyset$.}
\\

    We can choose an edge $y_{p}y_{q}\in E(Y_{p},Y_{q})$ for some $1 \leq p \leq q\leq k$. Let $$F_{i_{1}},F_{i_{2}},\cdots ,F_{i_{r}}  \ \ \text{and} \ \ 
 F_{j_{1}},F_{j_{2}},\cdots ,F_{j_{s}}$$ be good sequences with respect to $(X,Y)$ with $i_{1}=1=j_{1}$, $i_{r}=p$ and $j_{s}=q$.  Choose an edge $y_{i_{h}}x_{i_{h+1}}\in E(Y_{i_{h}},X_{i_{h+1}})$ for all $1\leq h<r$ and an edge $y_{j_{h}}x_{j_{h+1}}\in E(Y_{j_{h}},X_{j_{h+1}})$ for all $1\leq h<s$. Consider the following two subcases.\\   

{\bf Subcase 2.1. $F_{q}\in \{F_{i_{1}},F_{i_{2}},\cdots ,F_{i_{r}}\}$.}
\\

We have $F_{q}=F_{i_{a}}$ for some $1\leq a\leq r$. Then $$T=\left(\bigcup_{1\leq h\leq {r-1}}(y_{i_{h}}x_{i_{h+1}}\cup (F_{i_{h}}\setminus e_{1}(y_{i_{h}},F_{i_{h}})))\right)\cup \{y_{p}y_{q}\} \cup (F_{p}\setminus e_{2}(y_{p},F_{p}))$$ is a good even tree.\\

{\bf Subcase 2.2. $F_{q}\notin \{F_{i_{1}},F_{i_{2}},\cdots ,F_{i_{r}}\}$.} 
\\

Choose an integer $j_{b}$ such that $F_{j_{b}}\in 
 \{F_{i_{1}},F_{i_{2}},\cdots ,F_{i_{r}}\}$ and $b$ is as large as possible. We have $1\leq b<s$ since $j_{1}=i_{1}$ and $F_{j_{s}}=F_{q}\notin \{F_{i_{1}},F_{i_{2}},\cdots ,F_{i_{r}}\}$. Then $F_{j_{b}}=F_{i_{a}}$ for some $1\leq a\leq r$ and $\{F_{i_{1}},F_{i_{2}},\cdots ,F_{i_{r}}\} \cap \{F_{j_{b+1}},F_{j_{b+2}},\cdots ,F_{j_{s}}\}= \emptyset$. Then, 

 \begin{align*}
    T= &\left(\bigcup_{1\leq h\leq {r-1}}(y_{i_{h}}x_{i_{h+1}}\cup (F_{i_{h}}\setminus e_{1}(y_{i_{h}},F_{i_{h}})))\right)\cup \{y_{p}y_{q}\} \cup (F_{p}\setminus e_{2}(y_{p},F_{p}))\cup \\ &\left(\bigcup_{b+1 \leq h\leq {s-1}}(y_{j_{h}}x_{j_{h+1}}\cup (F_{j_{h}}\setminus e_{1}(y_{j_{h}},F_{j_{h}})))\right) \cup \{y_{j_{b}}x_{j_{b+1}}\} \cup (F_{q}\setminus e_{1}(y_{q},F_{q}))
\end{align*}
is a good even tree.
\end{proof}

\begin{theorem}\label{spanning}
  If $G$ is an $r$-regular nonbipartite connected graph where $r$ is odd, then $G$ has a spanning even tree.  
\end{theorem}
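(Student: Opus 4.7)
The plan is to extend the good even tree produced by Theorem~\ref{good} into a spanning even tree by greedily absorbing the components of the remaining good $\{1,2\}$-factor. Let $T_0$ denote this good even tree and write $F'=H_1\cup\cdots\cup H_m$ for the good $\{1,2\}$-factor of $G\setminus V(T_0)$; each $H_i$ is an even cycle, an odd cycle, or a single edge. Throughout the construction I maintain a bipartition $(A,B)$ of the current extended even tree $T$ with every leaf of $T$ in $A$.

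The atomic absorption of a component $H$ at an edge $uv\in E(G)$ with $u\in V(T)$ and $v\in V(H)$ consists of adjoining $uv$ together with $H\setminus e_1(v,H)$ to $T$. A short parity check dictates the side of $(A,B)$ from which $u$ must be drawn: $u\in A$ when $H$ is an even cycle or a single edge, and $u\in B$ when $H$ is an odd cycle. In each case the far endpoint of the newly added path (or $b$ itself, when $H=\{a,b\}$ and $v=a$) is the unique new leaf and lies in $A$, so the tree remains even.

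The main task is to show that such an absorbable component always exists as long as $F'$ is nonempty. Here I would reuse the consistent-bipartition and maximality framework behind Theorem~\ref{good}: extending $(A,B)$ to a consistent bipartition $(X_i,Y_i)$ of every $H_i$, declare $H_i$ to be \emph{reachable} if some chain $H_{i_1},\ldots,H_{i_t}=H_i$ begins with an edge of the appropriate parity from $V(T)$ to $V(H_{i_1})$ and satisfies $E(Y_{i_h},X_{i_{h+1}})\ne\emptyset$ for every $1\le h<t$. Pick a consistent bipartition that maximizes the set of reachable components. If some component were still not reachable, then the union of the $Y_i$-parts over the reachable components, together with the appropriate side of $V(T)$, would form an independent set whose outgoing edges all return to the reachable block, and Lemma~\ref{fk} would furnish an edge that contradicts the maximality of the chosen bipartition, exactly as in Case~1 of the proof of Theorem~\ref{good}.

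Once every component of $F'$ is reachable, I would splice the chains onto $T_0$ using $e_1$ and $e_2$ in direct analogy with Case~2 of Theorem~\ref{good}, obtaining a spanning subgraph whose connectivity follows from the chain structure, whose acyclicity follows from the single deletion made in each cycle, and whose evenness follows from the bipartition matching along each chain. The main obstacle I anticipate is the bookkeeping for single-edge components appearing in the interior of a chain: such an $H_i$ has no cycle edge available to delete, so both transition edges incident to it must meet its single edge at specifically chosen endpoints in order to keep the parities aligned. Branching the definition of a ``compatible edge'' on whether $H_i$ is a cycle or an edge, just as $e_1$ and $e_2$ already branch in their definitions, is precisely the additional discussion that the authors advertised at the start of the section; once it is installed, the rest of the proof is a direct adaptation of the arguments already carried out in Theorem~\ref{good}.
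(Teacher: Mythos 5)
Your atomic-absorption step and its parity bookkeeping are fine, but the organizing claim of the proposal --- ``the main task is to show that such an absorbable component always exists as long as $F'$ is nonempty'' --- is exactly what fails, and the paper's proof is structured around that failure. The first thing the paper establishes (Claim~1) is that for a \emph{maximal} good even tree $T$ one has $E(X_0,F)=\emptyset$: no edge of $G$ leaves the leaf side at all. Since your atomic absorption requires the attachment vertex $u$ to lie in $A$ (the leaf side) whenever $H$ is an even cycle or a single edge, and the only components atomically absorbable from $B=Y_0$ are odd cycles (which are swallowed immediately), you land in a state where $F'$ is nonempty, consists entirely of even cycles and single edges, and every edge from $V(T)$ to $F'$ emanates from $B$. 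In that state no atomic absorption applies and, under your definition, no chain ``begins with an edge of the appropriate parity from $V(T)$,'' so nothing is reachable and the greedy process stalls without $F'$ being empty.

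The missing idea is that chains must be launched from $Y_0$ into the $X$-parts of the components, and such a chain of even cycles and single edges is \emph{not} absorbable on its own: it must be \emph{closed} by a second, independent edge --- back into $Y_0$, between the $Y$-parts of two chains (where the $e_2$ deletion at $y_p$ is performed), or into an odd cycle lying outside the additive set. Producing that closing edge is what the paper's Cases~1 and~2 (with their subcases) do, via Lemma~\ref{fk} applied to $\bigcup_{i\le t}Y_i$ balanced against $\bigcup_{i\le t}X_i$. Your appeal to Lemma~\ref{fk} also misapplies its hypotheses: you propose to include ``the appropriate side of $V(T)$'' in the independent set, but $Y_0$ need not be independent in $G$ (only in $T$), and adding it destroys the cardinality balance $|X|\le|Y|$ that the lemma requires. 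Your closing paragraphs do gesture at the $e_2$ splice and at Case~2 of Theorem~\ref{good}, but they are not reconciled with your reachability definition, which points the initial edge at the wrong side of the bipartition for precisely the components that cause all the difficulty; as written the proposal does not close.
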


\begin{proof}
Let $T$ be a good even tree such that $|V(T)|$ is as large as possible, the existence of $T$ is guaranteed by Theorem~\ref{good}. Suppose, for the sake of contradiction, that $V(T)<V(G)$. Let $(X_{0},Y_{0})$ be the  bipartition of $T$ where $X_{0}$ contains all leaves of $T$. Let  $F=F_{1}\cup F_{2}\cup\cdots \cup F_{m}$ be a good $\{1,2\}$-factor of $G\setminus V(T)$.

\begin{claim}\label{X_{0}}
    $E(X_{0},F)=\emptyset$ and $E(Y_{0},F)\neq \emptyset$.
\end{claim}
\begin{proof}
    Suppose, for the sake of contradiction, that there is an edge $xu\in E(X_{0},F_{i})$ for some $1\leq i \leq m$. If $F_{i}$ is an even cycle, then $T^{'}=T\cup \{xu\}\cup (F_{i}\setminus e_{1}(u,F_{i}))$ is a good even tree, a contradiction to the maximality of $T$. If $F_{i}=uv_{1}v_{2} \dots v_{c}u$ is an odd cycle, then $T^{'}=T\cup \{xu\}\cup (F_{i}\setminus \{v_{1}v_{2}\})$ is a good even tree, a contradiction to the maximality of $T$. If $F_{i}$ is an edge then $T^{'}=T\cup \{xu\}\cup F_{i}$ is a good even tree, a contradiction to the maximality of $T$.

    Hence $E(X_{0},F)=\emptyset$. The assertion that $E(Y_{0},F)\neq \emptyset$ now follows from the assumptions that $G$ is connected and $V(T)<V(G)$.    
\end{proof}

Thus, we suppose $yu\in E(Y_{0},F_{i})$ for some $i\in [m]$. If $F_{i}$ is an odd cycle, then $T^{'}=T\cup \{yu\}\cup (F_{i}\setminus e_{1}(u,F_{i}))$ is a good even tree, a contradiction to the maximality of $T$. Therefore, $F_{i}$ is an even cycle or an edge. 
Without loss of generality, let $F^{even}=\{F_{1},F_{2},\cdots ,F_{k}\}$ be the set of all even cycles and edges in $F$. Clearly, $F^{even}\neq \emptyset$. Let $F^{'}=\cup^{k}_{j=1}F_{j}$. Since $F^{even}$ is the set of all even cycles and edges in $F$, $F^{'}$ is a balanced bipartite subgraph of $G$. Let $(X^{'},Y^{'})$ be a bipartition of $F^{'}$, let $X_{i}=X^{'}\cap V(F_{i})$ and $Y_{i}=Y^{'}\cap V(F_{i})$ for all $i\in [k]$. Then $(X_{i},Y_{i})$ is a bipartition of $F_{i}$ for all $i\in [k]$. A sequence $F_{i_{1}},F_{i_{2}},\cdots ,F_{i_{t}}$ in $F^{even}$ is an additive sequence with respect to the bipartition $(X^{'},Y^{'})$ if $E(Y_{i_{h}},X_{i_{h+1}})\ne \emptyset$ for all $1\leq h<t$ and $E(Y_{0},X_{i_{1}})\ne \emptyset$. We say that $F_{j}\in F^{even}$ is additive with respect to the bipartition $(X^{'},Y^{'})$ if there exists an additive sequence ending at it. 

   Now we choose a bipartition $(X^{'},Y^{'})$ of $F^{'}$ such that the number of elements in $F^{even}$ which are additive with respect to $(X^{'},Y^{'})$ is as large as possible. Let $F^{add}$ be the set of elements in $F^{even}$ which are additive with respect to $(X^{'},Y^{'})$. Then by Claim \ref{X_{0}} and the definition of $F^{even}$, we obtain that $F^{add}\neq \emptyset$.
   By symmetry, we may assume $F^{add}=\{F_{1},F_{2},\cdots ,F_{t}\}$. \\

   {\bf Case 1. $E(\cup ^{t}_{i=1} Y_{i})=\emptyset$.}
\\

Lemma \ref{fk} implies that $E(\cup ^{t}_{i=1} Y_{i},V(G)\setminus V(F^{add}))\neq \emptyset$. Notice that $$V(G)\setminus V(F^{add})=X_{0}\cup Y_{0} \cup \bigcup_{j=t+1}^{m} V(F_{j})   $$ and by Claim \ref{X_{0}}, $E(X_{0},F)=\emptyset$.
If there exists an edge $y_{p}y_{0}^{'}\in E(Y_{p},Y_{0})$ for some $1\leq p\leq t$. Let  $F_{i_{1}},F_{i_{2}},\cdots ,F_{i_{s}}$ be an additive sequence with respect to $(X^{'},Y^{'})$ with $i_{s}=p$. Choose an edge $y_{i_{h}}x_{i_{h+1}}\in E(Y_{i_{h}},X_{i_{h+1}})$ for all $1\leq h < s$ and $y_{0}x_{i_{1}}\in E(Y_{0},X_{i_{1}})$. Then $$T^{'}=T\cup \{y_{0}x_{i_{1}}\} \cup \left(\bigcup^{s-1}_{h=1}(y_{i_{h}}x_{i_{h+1}}\cup (F_{i_{h}}\setminus e_{1}(y_{i_{h}},F_{i_{h}})))\right)\cup \{y_{p}y_{0}^{'}\} \cup (F_{p}\setminus e_{2}(y_{p},F_{p}))$$ is a good even tree in $G$. This contradicts the maximality of $T$.

Hence there is an edge $y_{p}v_{j}\in E(Y_{p},F_{j})$ for some $1 \leq p \leq t$ and some $t<j\leq m$. If $F_{j}$ is an even cycle or an edge, then $F_{j}$ is additive with respect to the bipartition $X^{''}=(X^{'}\setminus X_{j})\cup Y_{j}$ and $Y^{''}=(Y^{'}\setminus Y_{j})\cup X_{j}$ and all elements in $F^{add}\cup \{F_{j}\}$ remain additive with respect to $(X^{''},Y^{''})$. This contradicts the choice of $(X^{'},Y^{'})$. Thus, $F_{j}$ must be an odd cycle. Then $$T^{'}=T\cup \{y_{0}x_{i_{1}}\}\cup \left(\bigcup^{s-1}_{h=1}(y_{i_{h}}x_{i_{h+1}}\cup (F_{i_{h}}\setminus e_{1}(y_{i_{h}},F_{i_{h}})))\right) \cup \{y_{p}v_{j}\} \cup (F_{j}\setminus e_{1}(v_{j},F_{j}))\cup (F_{p}\setminus e_{1}(y_{p},F_{p}))$$ is a good even tree in $G$. This contradicts the maximality of $T$.\\

   {\bf Case 2. $E(\cup ^{t}_{i=1} Y_{i})\neq \emptyset$.}
\\

    We can choose an edge $y_{p}y_{q}^{'}\in E(Y_{p},Y_{q})$ for some $1 \leq p \leq q\leq t$. Let $$F_{i_{1}},F_{i_{2}},\cdots ,F_{i_{r}}  \ \ \text{and} \ \ 
 F_{j_{1}},F_{j_{2}},\cdots ,F_{j_{s}}$$ be additive sequences with respect to $(X^{'},Y^{'})$ with $i_{r}=p$ and $j_{s}=q$.  Choose edges $y_{0}x_{i_{1}}\in E(Y_{0},X_{i_{1}})$, $y_{0}^{'}x_{j_{1}}^{'}\in E(Y_{0},X_{j_{1}})$, $y_{i_{h}}x_{i_{h+1}}\in E(Y_{i_{h}},X_{i_{h+1}})$ for all $1\leq h<r$ and $y_{j_{h}}^{'}x_{j_{h+1}}^{'}\in E(Y_{j_{h}},X_{j_{h+1}})$ for all $1\leq h<s$. Consider the following two subcases.\\   

{\bf Subcase 2.1. $F_{q}\in \{F_{i_{1}},F_{i_{2}},\cdots ,F_{i_{r}}\}$.}
\\

    We have $F_{q}=F_{i_{a}}$ for some $1\leq a\leq r$. Then $$T^{'}=T\cup \left(\bigcup_{1\leq h\leq {r-1}}(y_{i_{h}}x_{i_{h+1}}\cup (F_{i_{h}}\setminus e_{1}(y_{i_{h}},F_{i_{h}})))\right)\cup \{y_{p}y_{q}^{'}\} \cup (F_{p}\setminus e_{2}(y_{p},F_{p}))\cup \{y_{0}x_{i_{1}}\}$$ is a good even tree. This contradicts the maximality of $T$.\\

{\bf Subcase 2.2. $F_{q}\notin \{F_{i_{1}},F_{i_{2}},\cdots ,F_{i_{r}}\}$.} 
\\

If $\{F_{i_{1}},F_{i_{2}},\cdots ,F_{i_{r}}\} \cap \{F_{j_{1}},F_{j_{2}},\cdots ,F_{j_{s}}\} \neq \emptyset$, then we choose an integer $j_{b}$ such that $F_{j_{b}}\in \{F_{i_{1}},F_{i_{2}},\cdots ,F_{i_{r}}\}$ and $b$ is as large as possible. We have $1\leq b<s$ since $F_{j_{s}}=F_{q}\notin \{F_{i_{1}},F_{i_{2}},\cdots ,F_{i_{r}}\}$. Then $F_{j_{b}}=F_{i_{a}}$ for some $1\leq a\leq r$ and $\{F_{i_{1}},F_{i_{2}},\cdots ,F_{i_{r}}\} \cap \{F_{j_{b+1}},F_{j_{b+2}},\cdots ,F_{j_{s}}\}= \emptyset$. 

Let 
\begin{align*}
    T^{'}= &T\cup \{y_{0}x_{i_{1}}\}\cup \left(\bigcup_{1\leq h\leq {r-1}}(y_{i_{h}}x_{i_{h+1}}\cup (F_{i_{h}}\setminus e_{1}(y_{i_{h}},F_{i_{h}})))\right)\cup \{y_{p}y_{q}^{'}\} \cup (F_{p}\setminus e_{2}(y_{p},F_{p}))\cup \\ &\left(\bigcup_{b+1 \leq h\leq {s-1}}(y_{j_{h}}^{'}x_{j_{h+1}}^{'}\cup (F_{j_{h}}\setminus e_{1}(y_{j_{h}},F_{j_{h}})))\right) \cup \{y_{j_{b}}^{'}x_{j_{b+1}}^{'}\} \cup (F_{q}\setminus e_{1}(y_{q}^{'},F_{q}))
\end{align*}
Then $T^{'}$ is a good even tree. This contradicts the maximality of $T$.

Therefore, $\{F_{i_{1}},F_{i_{2}},\cdots ,F_{i_{r}}\} \cap \{F_{j_{1}},F_{j_{2}},\cdots ,F_{j_{s}}\}= \emptyset$, then 

 \begin{align*}
    T^{'}= &T\cup \{y_{0}x_{i_{1}}\}\cup \left(\bigcup_{1\leq h\leq {r-1}}(y_{i_{h}}x_{i_{h+1}}\cup (F_{i_{h}}\setminus e_{1}(y_{i_{h}},F_{i_{h}})))\right)\cup \{y_{p}y_{q}^{'}\} \cup (F_{p}\setminus e_{2}(y_{p},F_{p}))\cup \\ &\left(\bigcup_{1\leq h\leq {s-1}}(y_{j_{h}}^{'}x_{j_{h+1}}^{'}\cup (F_{j_{h}}\setminus e_{1}(y_{j_{h}},F_{j_{h}})))\right) \cup \{y_{0}^{'}x_{j_{1}}^{'}\}  \cup (F_{q}\setminus e_{1}(y_{q}^{'},F_{q}))
\end{align*}
is a good even tree. This contradicts the maximality of $T$ and
completes the proof of Theorem \ref{spanning}.

\end{proof}

	


\bibliographystyle{plain}
\bibliography{refs} 
 
\end{document}